\def \r{\mathbb R}
\def \q{\mathbb Q}
\def \z{\mathbb Z}
\def \n{\mathbb N}
\newtheorem{statement}{Statement}[section]
\newtheorem{proposition}{Proposition}[section]
\newtheorem{hyp}{Hypothesis}
\theoremstyle{remark}
\theoremstyle{plane}
\newtheorem{example}{Example}[section]
\title
[Two-dimensional periodic continued fractions]
{On tori triangulations associated with two-dimensional continued fractions
of cubic irrationalities.}
\author{O.~N.~Karpenkov}
\thanks{Supported by INTAS-00-0259, SS-1972.2003.1 and RFBR-01-01-00803
projects}
\email[Oleg Karpenkov]{karpenk@mccme.ru}
\begin{document}
\input epsf
\maketitle

\section*{Introduction.}

A series of properties for ordinary continued fractions possesses
multidimensional analogues.
H.~Tsuchihashi~\cite{Tsu} showed the connection between
periodic multidimensional continued fractions and multidimensional cusp
singularities.
The relation between sails of multidimensional continued fractions
and Hilbert bases is describe by J.-O.~Moussafir in the work~\cite{Mus}.

In his book~\cite{Arn1} dealing with theory of continued fractions
V.~I.~Arnold gives various images for the sails of two-dimensional
continued fraction generalizes the golden ratio.
In the article~\cite{Kor2} E.~I.~Korkina investigated the sales
for the simplest two-dimensional continued fractions of cubic irrationalities,
whose fundamental region consists of two triangles, three edges
and one vertex.

We consider the same model of the multidimensional continued fraction
as was considered by the authors mentioned above.
In the present work the examples of new tori triangulation of the
sails for two-dimensional continued fractions of cubic irrationalities
for some special families possessing the fundamental regions
with more complicated structure are obtained.

In $\S$1 the necessary definitions and notions are given.
In $\S$2 the properties of two-dimensional continued fractions
constructed using Frobenius operators are investigated,
the relation between the equivalence classes of
tori triangulations  and cubic extensions for the field of
rational numbers is discussed.
(The detailed analysis of the properties of cubic extensions
for the rational numbers field and their classification
is realized by B.~N.~Delone and D.~K.~Faddeev in the work~\cite{Del1}.)
In $\S$3 the appearing examples of tori triangulations are discussed.

The author is grateful to professor V.~I.~Arnold for constant attention
for this work and for useful remarks.

\section{Definitions.}

Points of the space $\r^k$  ($k\ge 1$) whose coordinates are all integer
numbers are called {\it integer points}.

Consider a set of $n+1$ hyperplanes passing through the origin
in general position in the space $\r^{n+1}$.
The complement to these hyperplanes consists of
$2^{n+1}$ open orthants. Let us choose an arbitrary orthant.

The boundary of the convex hull of all integer points except
the origin in the closure of the orthant is called {\it the sail}.

The union of all $2^{n+1}$ sails defined by these hyperplanes
of the space $r^{n+1}$ is called
{\it $n$-dimensional continued fraction} constructed according to
the given $n+1$ hyperplanes in general position in
$n+1$-dimensional space.

Two $n$-dimensional sails (continued fractions) are
called {\it equivalent} if there exists
a linear integer lattice preserving transformation
of the $n+1$-dimensional space such that it maps one
sail (continued fraction) to the other.

To construct the whole continued fraction up to the equivalence relation
in one-dimensional case it is sufficiently  to know
some integer characteristics of one sail
(that is to say the integer lengths of the edges and the integer
angles between the consecutive edges of one sail).

\begin{hyp}\label{hyp1}{\bf (Arnold)}
There exist the collection of integer characteristics of the sail
that uniquely up to the equivalence relation determines the continued fraction.
\end{hyp}

Let $A \in GL(n+1,\r)$ be an operator  whose roots are all real and distinct.
Let us take the $n$-dimensional spaces that spans all possible subsets
of $n$ linearly independent eigenvectors of the operator $A$.
As far as eigenvectors are linearly independent,
the obtained $n+1$ hyperspaces will be $n+1$ hyperspaces
in general position.
The multidimensional continued fraction is constructed just with
respect to these hyperspaces.

\begin{proposition}
Continued fractions constructed by operators $A$ and $B$ of the
group $GL(n+1,\r)$ with distinct real irrational eigenvalues
are equivalent iff there exists such an integer operator $X$ with
the determinant equals to one that the operator $\tilde A$ obtained
from the operator $A$ by means of the conjugation by the operator $X$
commutes with $B$.
\end{proposition}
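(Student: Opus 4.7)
The plan is to prove both implications by exploiting the fact that, for an operator with distinct real eigenvalues, the continued fraction depends only on the unordered collection of eigenlines (equivalently, on the $n+1$ coordinate hyperplanes of an eigenbasis). I will then couple this with the standard fact that two diagonalizable operators commute iff they can be simultaneously diagonalized.

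First, I would prove the forward implication. Assume the continued fractions of $A$ and $B$ are equivalent, realized by an integer lattice-preserving transformation $X$. By definition $X$ maps the union of sails of $A$ to the union of sails of $B$, hence it sends the $n+1$ eigenhyperplanes of $A$ bijectively to those of $B$. Passing to the intersections of these hyperplanes, $X$ permutes the eigenlines: if $v_0,\dots,v_n$ and $w_0,\dots,w_n$ are eigenbases for $A$ and $B$, then $Xv_i=c_iw_{\sigma(i)}$ for some permutation $\sigma$ and nonzero scalars $c_i$. Consequently $\tilde A=XAX^{-1}$ is diagonal in the basis $\{w_{\sigma(i)}\}$, with the same eigenvalues as $A$. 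Since $B$ is also diagonal in this basis and all its eigenvalues are distinct (so eigenspaces are one-dimensional), $\tilde A$ and $B$ commute.

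For the converse, suppose such an integer $X$ exists. Since $B$ has distinct real eigenvalues, its eigenspaces are one-dimensional, and any operator commuting with $B$ preserves each of them; hence every eigenvector of $B$ is also an eigenvector of $\tilde A$. Thus $\tilde A$ and $B$ share the same collection of eigenlines, so they determine the same $n+1$ hyperplanes and hence the same continued fraction. On the other hand, $X$ is an integer unimodular transformation taking the continued fraction of $A$ to that of $\tilde A$, giving the required equivalence between the continued fractions of $A$ and $B$.

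The main technical point — and the only place where care is required — is justifying that an equivalence of continued fractions (a priori only a bijection of the sails) actually permutes the eigenhyperplanes. This follows because the eigenhyperplanes are precisely the affine hulls of the family of sails meeting at infinity, and a linear map that sends one union of sails to the other must therefore send the corresponding hyperplanes to each other. A minor bookkeeping point is the determinant condition: a lattice preserving transformation has $\det=\pm1$, and one should comment on why (or to what extent) the statement can be restricted to $\det=1$, e.g.\ by composing with a reflection that itself commutes with the diagonal action on eigenlines, so that the conclusion is unaffected.
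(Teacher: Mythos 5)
Your proof is correct and follows essentially the same route as the paper's: equivalence forces the conjugated operator $\tilde A$ and $B$ to share their eigendirections, hence to be simultaneously diagonalizable and therefore to commute, and conversely commutation with an operator having distinct eigenvalues forces a common eigenbasis, hence the same hyperplane arrangement and the same continued fraction. Your added care about why an equivalence of sails must permute the eigenhyperplanes, and about the $\det=\pm1$ issue, only makes explicit points the paper passes over silently.
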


\begin{proof}
Let the continued fractions constructed by operators $A$ and $B$ of the
group $GL(n+1,\r)$ with distinct real irrational eigenvalues
are equivalent, i. e. there exists linear integer lattice preserving
transformation of the space that maps the continued fraction of the operator
$A$ to the continued fraction of the operator $B$
(and the orthants of the first continued fraction maps to the orthants of
the second one).
Under such transformation of the space
the operator $A$ conjugates by some integer operator $X$ with the determinant
equals to one.
All eigenvalues of the obtained operator $A$ are distinct and real
(since the characteristic polynomial of the operator is preserving).
As far as the orthants of the first continued fraction maps to the
orthants of the second one, the sets of the proper directions
for the operators $\tilde A$ and $B$ coincides. Thus, given operators are
diagonalizable together in some basis and hence they commutes.

Let us prove the converse. Suppose there exists such
an integer operator $X$ with the determinant equals to one
that the operator $\tilde A$ obtained from the operator $A$ by means of
the conjugation by the operator $X$ commutes with $B$.
Note that the eigenvalues of the operators $A$ and $\tilde A$ coincide.
Therefore all eigenvalues of the operator $\tilde A$
(just as for the operator $B$) are real, distinct, and irrational.
Let us consider such basis that the operator $\tilde A$ is diagonal in it.
Simple verification shows that the operator $B$ is also diagonal in this
basis.
Consequently the operators $\tilde A$ and $B$ defines the same orthant
decomposition of the $n+1$-dimensional space and
the operators corresponding to this continued fractions coincide.
It remains just to note that a conjugation by an integer operator with the
determinant equals to one corresponds to the linear
integer lattice preserving transformation of the $n+1$-dimensional space.
\end{proof}

Later on we will consider only continued fractions constructed by
some invertible integer operator of the $n+1$-dimensional space
such that its inverse is also integer.
The set of such operators form the group denoted by $GL(n+1,\z)$.
This group consist of the integer operators with the determinants equal
to $\pm 1$.

The $n$-dimensional continued fraction constructed by
the operator $A \in GL(n+1,\z)$ whose characteristic polynomial
over the rational numbers field
is irreducible and eigenvalues are real is called
{\it the $n$-dimensional continued fraction of $(n+1)$-algebraic
irrationality}.
The cases of $n=1,2$ correspond to {\it one$($two$)$-dimensional
continued fractions of quadratic $($cubic$)$ irrationalities}.

Let the characteristic polynomial of the operator $A$ be
irreducible over the rational numbers field
and its roots be real and distinct.
Under the action of the integer operators
commuting with $A$ whose determinants
are equal to one and preserving the given sail
the sail maps to itself.
These operators form an Abelian group.
It follows from Dirichlet unity elements theorem (см.~\cite{BSh})
that this group is isomorphic to $\z^n$ and that its action is free.
The factor of a sail under such group action is isomorphic to
$n$-dimensional torus at that.
(For the converse see~\cite{Kor1} and~\cite{Tsu}.)
The polyhedron decomposition of $n$-dimensional torus is defined
in the natural way, the affine types of the polyhedrons are also defined
(in the notion of the affine type we include the number and mutual
arrangement of the integer points for the faces of the polyhedron).
In the case of two-dimensional continued fractions for cubic irrationalities
such decomposition is usually called torus {\it triangulation}.

By {\it a fundamental region} of the sail we call a union of some
faces that contains exactly one face from each equivalence class.

\section{ Conjugacy classes of two-dimensional continued fractions
for cubic irrationalities.}

Two-dimensional continued fractions for cubic irrationalities
constructed by the operators $A$ and $-A$ coincide.
In that way the study of continued fractions for integer operators
with the determinants equal to $\pm 1$ reduces to the study of
continued fractions for integer operators
with the determinants equal to one (i. e. operators of the group $SL(3,\z)$).

An operator (matrix) with the determinant equals to one
$$
A_{m,n}:=
\left(
\begin{array}{ccc}
0 &1 &0 \\
0 &0 &1 \\
1 &-m &-n \\
\end{array}
\right),
$$
where $m$ and $n$ are arbitrary integer numbers is called
{\it a Frobenius operator $($matrix$)$}.
Note the following: if the characteristic polynomial $\chi_{A_{m,n}(x)}$
is irreducible over the field $\q$ than
the matrix for the left multiplication by the element $x$
operator in the natural basis $\{1,x,x^2 \}$
in the field $\q[x]\big/ \big( \chi_{A_{m,n}(x)}\big)$
coincides with the matrix $A_{m,n}$.

Let an operator $A\in SL(3,\z)$ has distinct real irrational
eigenvectors.
Let $e_1$ be some integer nonzero vector, $e_2=A(e_1)$, $e_3=A^2(e_1)$.
Then the matrix of the operator in the basis $(e_1, e_2, c e_3)$
for some rational $c$ will be Frobenius. However the transition
matrix here could be non-integer and the corresponding continued fraction
is not equivalent to initial one.

\begin{example}
The continued fraction constructed by the operator
$$
A=
\left(
\begin{array}{ccc}
1  &2 &0  \\
0  &1 &2  \\
-7 &0 &29 \\
\end{array}
\right),
$$
is not equivalent to the continued fraction constructed by any
Frobenius operator with the determinant equals to one.
\end{example}

Thereupon the following question is of interest.
{\it How often the continued fractions that don't correspond to
Frobenius operators can occur?}

In any case the family of Frobenius operators possesses some
useful properties that allows us to construct
whole families of nonequivalent two-dimensional periodic
continued fractions at once, that extremely actual itself.

It is easy to obtain the following statements.

\begin{statement}
The set $\Omega$ of operators $A_{m,n}$ having
all eigenvalues real and distinct is defined by the inequality
$n^2m^2-4m^3+4n^3-18mn-27\le 0$.
For the eigenvalues of the operators of the set to be irrational
it is necessary to subtract extra two perpendicular lines in the
integer plain: $A_{a,-a}$ and $A_{a,a+2}$, $a \in \z$.
\end{statement}

\begin{statement}\label{p1}
The two-dimensional continued fractions for the cubic irrationalities
constructed by the operators $A_{m,n}$ and $A_{-n,-m}$ are equivalent.
\end{statement}

Further we will consider all statements modulo this symmetry.

{\it Remark.} Example~\ref{primer21} given below shows that
among periodic continued fractions constructed by operators in the set
$\Omega$ equivalent continued fractions can happen.

Let us note that there exist nonequivalent two-dimensional periodic
continued fractions constructed by operators of the group $GL(n+1,\r)$
whose characteristic polynomials define isomorphic extensions of the
rational numbers field.
In the following example the operators with equal characteristic polynomials
but distinct continued fractions are shown.

\begin{example}\label{primer}
The operators $(A_{-1,2})^3$ and $A_{-4,11}$ have distinct two-dimensional
continued fractions $($although their characteristic polynomials coincides$)$.
\end{example}

At the other hand similar periodic continued fractions can correspond to
operators with distinct characteristic polynomials.

\begin{example}\label{primer21}
The operators $A^2_{0,-a}$ and $A_{-2a,-a^2}$
are conjugated by the operator in the group $GL(3,\z)$
and hence the periodic continued fractions
$($including the torus triangulations$)$ corresponding to the operators
$A_{0,-a}$ and $A_{-2a,-a^2}$ are equivalent.
\end{example}

Let us note that distinct cubic extensions of the field $\q$
posses nonequivalent triangulations.

\section{Torus triangulations and fundamental regions
for some series of operators $A_{m,n}$}

Torus triangulations and fundamental domains for several infinite series
of Frobenius operators are calculated here.
In this paragraph it considers only the sails containing the point $(0,0,1)$
in its convex hull.

The ratio of the Euclidean volume for
an integer $k$-dimensional polyhedra in $n$-dimensional space
to the Euclidean volume for the minimal simplex in the same
$k$-dimensional subspace is called its {\it the integer $k$-dimensional
volume}
(if $k=1$ --- {\it the integer length} of the segment,
if $k=2$ --- {\it the integer area} of the polygon).
\\
The ratio of the Euclidean distance from an integer hyperplane
(containing an $n-1$-dimensional integer sublattice) to the integer point
to the minimal Euclidean distance from the hyperplane to some
integer point in the complement of this hyperplane
is called the corresponding {\it integer distance}.
\\
By {\it the integer angle} between two integer rays (i.e. rays that contain
more than one integer point) with the vertex at the same integer point
we call the value
$S(u,v)/(|u|\cdot|v|)$, where $u$ and $v$ are arbitrary
integer vectors passing along the rays
and $S(u,v)$ is the integer volume of the triangle with edges $u$ and $v$.

{\it Remark.}
Our integer volume is an integer number
(in standard parallelepiped measuring the value will be $k!$ times less).
The integer $k$-dimensional volume of the simplex is equal to the
index of the lattice subgroup generated by its edges having
the common vertex.

Since the integer angles of any triangle with all integer vertices
can be uniquely restored by the integer lengths of the triangle
and its integer volume we will not write the integer
angles of triangles below.

\begin{hyp}
The specified invariants distinguish all nonequivalent torus triangulations
of two-dimensional continued fractions for the cubic irrationalities.
\end{hyp}

In the formulations of the propositions~\ref{t31}---\ref{t35}
we say only about homeomorphic type for the torus triangulations
although the description of the fundamental regions allowing us
to calculate any other invariant including affine types of the faces
is given in the proofs.
(As an example we calculate integer volumes and distances
to faces in propositions~\ref{t31} and~\ref{t32}.)
The affine structure examples of all triangulation
faces are shown on the figures.

\begin{proposition}\label{t31}
Let
$m=b-a-1$, $n=(a+2)(b+1)$ $(a,b\ge 0)$
then the torus triangulation corresponding to the operator $A_{m,n}$
is homeomorphic to the following one:
$$\epsfbox{fig2.3}$$
$($in the figure $b=6)$.
\end{proposition}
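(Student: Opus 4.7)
The plan is to construct an explicit fundamental region for the sail of $A_{m,n}$ (with $m=b-a-1$, $n=(a+2)(b+1)$) containing the vertex $v_0=(0,0,1)$, and then to read off the torus triangulation by quotienting by the Dirichlet group of units commuting with $A_{m,n}$.

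First I would establish the group of symmetries. Since the characteristic polynomial of $A_{m,n}$ is irreducible over $\q$ with three distinct real roots (which is guaranteed for $a,b\ge 0$ by the Statement about $\Omega$, after checking that these parameters avoid the two excluded lines), Dirichlet's theorem gives that the integer operators commuting with $A_{m,n}$ and preserving the sail form a group isomorphic to $\z^2$. The operator $A_{m,n}$ itself is one generator. The second generator $B$ must be exhibited explicitly; a natural candidate is a quadratic polynomial in $A_{m,n}$ with coefficients depending on $a$ and $b$, chosen so that the entries are integral and the orbit of $v_0$ under $\langle A_{m,n},B\rangle$ sweeps out the full vertex set of the sail.

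Next I would list explicit integer vectors depending on the parameter $b$ (with $a$ fixed, say) and on $a$, which are claimed to be the vertices of the sail inside a chosen fundamental region. For each proposed triangular face I would verify that it is indeed a face by exhibiting an integer linear functional $\ell$ that attains its minimum over the closed orthant (restricted to integer points other than the origin) exactly on the vertices of that triangle. This is the step where most of the work lives. To organise it I would parameterise the boundary of the proposed fundamental region into a few arcs of vertices indexed by $0,1,\ldots,b$, verify the supporting hyperplane condition on each arc by a short calculation in the parameters $a,b$, and then invoke the action of $A_{m,n}$ and $B$ to propagate the verification to the whole sail.

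Finally I would check that the boundary of the fundamental region glues correctly under the action of $\langle A_{m,n},B\rangle$ to reproduce the triangulation in the figure, and compute the integer lengths, integer areas of the triangles, and integer distances from the faces to the origin as promised in the introductory remark to Propositions~\ref{t31}--\ref{t35}. The main obstacle is the second step: finding the second Dirichlet generator $B$ in closed form as a matrix polynomial in $A_{m,n}$ (or at least producing it with entries depending polynomially on $a,b$), and then verifying uniformly in $a,b\ge 0$ that the candidate list of vertices really are extremal points of the convex hull. Everything else is bookkeeping with integer lattice geometry once $B$ is in hand.
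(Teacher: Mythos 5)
Your overall strategy --- exhibit two commuting integer generators of the sail-preserving group, describe a fundamental region for their action on the sail through $(0,0,1)$, and read off the triangulation --- is exactly the strategy of the paper. But your proposal stops precisely where the content of the proposition begins: you never produce the second generator, never write down the candidate vertices, and you yourself flag these as ``the main obstacle.'' In the paper these are the whole proof: the generators are $X_{a,b}=A_{m,n}^{-2}$ and $Y_{a,b}=A_{m,n}^{-1}\big(A_{m,n}^{-1}-(b+1)I\big)$, and the fundamental region is the quadrilateral with vertices $A=(1,0,a+2)$, $B=(0,0,1)$, $C=(b-a-1,1,0)$, $D=((b+1)^2,b+1,1)$, glued by $X_{a,b}\colon AB\mapsto DC$ and $Y_{a,b}\colon AD\mapsto BC$, with the $b$ interior lattice points $((b+1)i,i,1)$, $1\le i\le b$, on the diagonal $BD$ accounting for the fan of triangles in the figure. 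Without this explicit data the argument cannot be completed, so as it stands the proposal is an outline rather than a proof.

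There is also one concrete error in your setup: you take $A_{m,n}$ itself as one of the two generators of the sail-preserving group. For this family the characteristic polynomial $x^3+nx^2+mx-1$ has negative trace $-n$, so $A_{m,n}$ has two negative eigenvalues; it permutes the orthants and, as the paper notes, \emph{transposes} the sails rather than preserving the one containing $(0,0,1)$. Only totally positive units preserve the sail, which is why the paper takes $A_{m,n}^{-2}$ and the operator $Y_{a,b}$ above (both with positive eigenvalues), and then justifies that these two generate the full sail-preserving group by observing that the resulting torus triangulation has a single vertex, hence no finer subperiod --- a point your plan would also need to address rather than merely ``invoking Dirichlet.'' Your idea of certifying each face by a supporting integer linear functional is sound and in fact more rigorous than the paper's ``as can be easily seen,'' but it is unusable until the vertex list is actually produced.
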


\begin{proof}
The operators
$$
X_{a,b}=A_{m,n}^{-2},
\quad
Y_{a,b}=A_{m,n}^{-1}\big(A_{m,n}^{-1}-(b+1)I\big)
$$
commutes with the operator $A_{m,n}$ without
transpose of the sails
(note that the operator $A_{m,n}$ transpose the sails).
Here $I$ is the identity element of the group $SL(3,\z)$.

Let us describe the closure for one of the fundamental regions obtaining
by factoring of the sail by the operators
$X_{a,b}$ and $Y_{a,b}$.
Consider the points $A=(1,0,a+2)$, $B=(0,0,1)$, $C=(b-a-1,1,0)$
and $D=((b+1)^2,b+1,1)$ of the sail containing the point $(0,0,1)$.
Under the operator $X_{a,b}$ action
the segment $AB$ maps to the segment $DC$ (the point $A$ maps to
the point $D$ and $B$ to $C$).
Under the operator $Y_{a,b}$ action
the segment $AD$ maps to the segment $BC$ (the point $A$ maps to
the point $B$ and $D$ to $C$).
The integer points $((b+1)i,i,1)$,
where $i\in \{ 1,\ldots ,b\}$ belong to the interval $BD$.

As can be easily seen the integer lengths of the segments $AB$,
$BC$, $CD$, $DA$ and $BD$
are equal to 1, 1, 1, 1 and $b+1$ correspondingly;
the integer areas of both triangles $ABD$ and $BCD$ are equal to $b+1$.
The integer distances from the origin to the plains containing
the triangles $ABD$ and $BCD$ are equal to $1$ and $a+2$ correspondingly.

The operators $X_{a,b}$ and $Y_{a,b}$ mapping the sail to itself,
since all their eigenvectors are positive
(or in this case it is equivalent to say that values
of their characteristic polynomials on negative semi-axis are always
negative).
Furthermore this operators are the generators
of the group of integer operators mapping the sail to itself, since
it turn out that torus triangulation obtaining
by factoring the sail by this operators contains the unique vertex
(zero-dimensional face),
and hence the torus triangulation have no smaller subperiod.
\end{proof}

Let us show that all vertices for the fundamental domain of the
arbitrary periodic continued fraction can be chosen from the closed
convex hull of the following points: the origin; $A$; $X(A)$;
$Y(A)$ and $XY(A)$, where $A$ is the arbitrary zero-dimensional
face of the sail, and operators $X$ and $Y$ are generators
of the group of integer operators mapping the sail to itself.

Consider a tetrahedral angle with the vertex at the origin and
edges passing through the points $A$, $X(A)$, $Y(A)$, and $XY(A)$.
The union of all images for this angle under the transformations
of the form $X^m Y^n$, where $m$ and $n$ are integer numbers,
covers the whole interior of the orthant.
Hence all vertices of the sail can be obtained by shifting
by operators $X^mY^n$ the vertices of the sail lying in our tetrahedral angle.
The convex hull for the integer points of the form $X^mY^n(A)$
is in the convex hull of all integer points for the given orthant at that.
Therefore the boundary of the convex hull for all integer points of the orthant
is in the complement to the interior points of the convex hull for the
integer points of the form $X^mY^n(A)$.
The complement in its turn is in the unit of all images for the convex hull
of the following points: the origin, $A$, $X(A)$, $Y(A)$, and $XY(A)$,
under the transformations of the form $X^m Y^n$, where $m$ and $n$ are
integer numbers.

It is obviously that all points of the constructed polyhedron except the origin
lie at the concerned open orthant at that.

\begin{proposition}\label{t32}
Let $m=-a$, $n=2a+3$ $(a\ge 0)$, then the torus triangulation
corresponding to the operator $A_{m,n}$ is homeomorphic to the following one:
$$\epsfbox{fig2.4}$$
\end{proposition}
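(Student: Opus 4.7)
The strategy mirrors the proof of Proposition~\ref{t31}. First I would exhibit two commuting integer operators $X_a$ and $Y_a$ preserving the sail through $(0,0,1)$ without transposing it, produce an explicit integer polygonal fundamental region for the action of the group $\langle X_a,Y_a\rangle$, and then verify minimality of the period. Natural candidates for $X_a$ and $Y_a$ are low-degree polynomials in $A_{-a,2a+3}^{-1}$ with integer coefficients depending on $a$; since $\det A_{m,n}=1$ the inverse is integer, and commutation with $A_{m,n}$ is automatic. The coefficients must be chosen so that all three eigenvalues of each generator are positive; by the same observation used in Proposition~\ref{t31}, this reduces to checking that the characteristic polynomial takes negative values on the negative semi-axis.

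Next I would identify a sequence of integer vertices $V_0,V_1,\ldots,V_k$ on the sail by starting from $(0,0,1)$ and iterating $A_{-a,2a+3}^{\pm 1}$ on a few nearby integer points until a closed polygon appears whose boundary is glued into a torus by $X_a$ and $Y_a$. Checking that one pair of boundary arcs is identified by $X_a$ and the other pair by $Y_a$ certifies that the quotient is a torus with the pictured combinatorics. Along the way I would record the integer lengths of the bounding segments, the integer areas of the faces and the integer distances from the origin to the supporting planes, which is the affine data promised in the paragraph preceding the proposition, and which is needed to match the specific triangulation drawn in Figure~fig2.4 rather than merely some combinatorially equivalent one.

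To close the argument, I would verify as in Proposition~\ref{t31} that the constructed torus triangulation has a single $0$-dimensional face. This automatically forces $\langle X_a,Y_a\rangle$ to coincide with the full stabilizer of the sail in $SL(3,\z)$, since any proper subperiod would produce a quotient with more than one vertex orbit.

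The main obstacle is guessing the pair $(X_a,Y_a)$: the family $m=-a$, $n=2a+3$ lies along a different ``degenerate'' line in $\Omega$ from the family of Proposition~\ref{t31}, so the explicit shifts and powers used there need to be replaced by a new ansatz, and positivity of eigenvalues must be reverified uniformly in $a\ge 0$. Once this delicate step is completed, the rest of the proof reduces to direct matrix arithmetic, with the more elaborate shape of the fundamental polygon accounting for the richer triangulation displayed in the figure.
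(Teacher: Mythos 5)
Your outline follows the paper's strategy exactly --- the paper's proof is indeed a direct copy of the scheme of Proposition~\ref{t31}: exhibit two generators, give an explicit fundamental polygon with its boundary identifications, record the affine data, and argue minimality from the uniqueness of the vertex orbit. But as written your text is a plan, not a proof: every piece of actual mathematical content is deferred. You acknowledge that ``the main obstacle is guessing the pair $(X_a,Y_a)$'' and then stop precisely at that obstacle. The proposition is not established until the generators and the fundamental region are written down and checked, and these are exactly the data the paper supplies: $X_a=A_{m,n}^{-2}$, $Y_a=(2I-A_{m,n}^{-1})^{-1}$, and the region with vertices $A=(0,0,1)$, $B=(2,1,1)$, $C=(7,4,2)$, $D=(-a,1,0)$ together with the interior vertex $E=(3,2,1)$, glued by $X_a\colon AB\mapsto DC$ and $Y_a\colon AD\mapsto BC$. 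Note also that $Y_a$ is not itself a polynomial in $A_{m,n}^{-1}$ but the \emph{inverse} of one; its integrality is a nontrivial determinant condition (it is the source of the divisibility constraints on $(\alpha,\beta)$ discussed at the end of \S3), so your ansatz of ``low-degree polynomials in $A^{-1}$ with integer coefficients'' would not have found it without that extra step.

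A second omission: the combinatorics of the quotient is not uniform in $a$. The paper's proof splits into $a=0$ (two triangles $ABD$, $BCD$ of integer areas $1$ and $3$, at integer distances $2$ and $1$ from the origin) and $a>0$ (four triangles $ABD$, $BDE$, $BCE$, $CED$, all of area $1$, at distances $a+2$, $a+1$, $1$, $1$). Your proposal promises to ``reverify positivity uniformly in $a\ge 0$'' but does not anticipate that the fundamental region itself changes shape at $a=0$, which is part of what must be verified to match the stated figure. In short: right template, but the proposition lives entirely in the explicit computations you have postponed.
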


\begin{proof}
Let us choose the following generators of the subgroup of integer operators
mapping the sail to itself:
$$
X_{a}=A_{m,n}^{-2};
\quad
Y_{a}=(2I-A_{m,n}^{-1})^{-1}.
$$

As in the previous case let us make the closure of one of the fundamental
regions of the sail
(containing the point $(0,0,1)$) that obtains by factoring by the operators
$X_{a}$ and $Y_{a}$.
Let $A=(0,0,1)$, $B=(2,1,1)$, $C=(7,4,2)$ and $D=(-a,1,0)$.
Besides this points the vertex $E=(3,2,1)$ is in the fundamental
region.
Under the operator $X_{a}$ action
the segment $AB$ maps to the segment $DC$ (the point $A$ maps to
the point $D$ and $B$ --- to $C$).
Under the operator $Y_{a}$ action
the segment $AD$ maps to the segment $BC$ (the point $A$ maps to
the point $B$ and $D$ --- to $C$).

If $a=0$ then the integer length of the sides $AB$, $BC$, $CD$ and $DA$
are equal to 1, and the integer areas of the triangles $ABD$ and
$BCD$ are equal to 1 and 3 correspondingly.
The integer distances from the origin to the plains containing
the triangles $ABD$ and $BCD$ are equal to $2$ and $1$ correspondingly.

If $a>0$ then all integer length of the sides and integer areas of all
four triangles are equal to 1.
The integer distances from the origin to the plains containing
the triangles $ABD$, $BDE$, $BCE$ and $CED$ are equal
to $a+2$, $a+1$, $1$ and $1$ correspondingly.

Here and below the proofs of the statements on the generators
are similar to the proof of the corresponding statements in the proof
of proposition~\ref{t31}.
\end{proof}

\begin{proposition} \label{t33}
Let $m=2a-5$, $n=7a-5$ $(a\ge 2)$, then the torus triangulation
corresponding to the operator $A_{m,n}$ is homeomorphic
to the following one:
$$\epsfbox{fig2.5}$$
$($in the figure $a=5)$.
\end{proposition}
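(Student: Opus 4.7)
The plan is to follow exactly the template set by Propositions~\ref{t31} and~\ref{t32}. First I would look for two commuting generators of the subgroup of $SL(3,\z)$ that fixes the sail containing $(0,0,1)$ and does not transpose it with its opposite. These should be expressible as low-degree polynomials in $A_{m,n}^{-1}$ (perhaps after inverting a small integer combination, as in $Y_a=(2I-A_{m,n}^{-1})^{-1}$ from Proposition~\ref{t32}). The natural first guess is $X_a=A_{m,n}^{-2}$, and then to search among $A_{m,n}^{-1}$, $A_{m,n}^{-1}-kI$ and their inverses for a partner $Y_a$ that (i) has integer matrix entries for every $a\ge 2$, (ii) is multiplicatively independent from $X_a$, and (iii) has all positive eigenvalues so that it does not swap the sail with the opposite one. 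The value of $k$ will be dictated by the largest real root of $\chi_{A_{m,n}}$; since $n=7a-5$ grows linearly, the dominant eigenvalue is roughly $a$, suggesting the relevant shift constant scales with $a$.

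Next, following Proposition~\ref{t32}, I would write down explicit candidate vertices $A,B,C,D,\dots$ for the closure of a fundamental region based on the picture $\texttt{fig2.5}$ (using $a=5$ as a calibration case). I would then verify by direct matrix multiplication that $X_a$ sends one boundary arc of the fundamental region onto the opposite arc and $Y_a$ sends the remaining pair of opposite arcs into each other, exactly as in Propositions~\ref{t31} and~\ref{t32}. The interior and boundary integer points would be enumerated from the formulas for the vertices, and the affine invariants (integer edge lengths, integer triangle areas, and integer distances from the origin to each face) would be computed as linear/quadratic expressions in $a$ and checked to match the figure.

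Finally, I would justify that $X_a$ and $Y_a$ really generate the full stabilizer subgroup by the same argument invoked after the proof of Proposition~\ref{t31}: the quotient of the sail by $\langle X_a, Y_a\rangle$ produces a triangulation with a single $0$-dimensional face class, so no proper subperiod is possible; combined with Dirichlet's unit theorem (giving rank $2$), the two operators must already span the full index-one subgroup. The positivity of their eigenvalues guarantees that they preserve (rather than swap) the chosen sail.

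The main obstacle will be guessing the correct form of $Y_a$: in Proposition~\ref{t32} the partner generator required inverting $2I-A_{m,n}^{-1}$, and here the analogous shift constant will need to be identified so that $Y_a$ is integral for every $a\ge 2$. Once $Y_a$ is in hand, the rest (verifying the vertex orbits and counting integer invariants) is a bounded symbolic computation in $a$, entirely parallel to the earlier proofs and requiring no new ideas.
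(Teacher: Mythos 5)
Your overall template is the right one and matches the paper's: the paper likewise exhibits two commuting sail-preserving generators, a quadrilateral fundamental region $ABCD$ with $X_a$ sending $AB$ to $DC$ and $Y_a$ sending $AD$ to $BC$, extra interior vertices ($E$ and $F$ here), and the integer-point counts on the edges, with the ``no smaller subperiod'' argument deferred to the one given for Proposition~\ref{t31}. However, there is a concrete gap at the step you yourself flag as the main obstacle: your proposed search space for the second generator --- $A_{m,n}^{-1}$, $A_{m,n}^{-1}-kI$, and their inverses, with the shift $k$ scaling like $a$ --- does not contain any operator that works for this family. The paper's generators are $X_a=2A_{m,n}^{-1}+7I$ and $Y_a=A_{m,n}^{2}$; the essential new element is the coefficient $2$ on $A_{m,n}^{-1}$, i.e.\ an operator of the form $\alpha I+\beta A_{m,n}^{-1}$ with $\beta\ne\pm 1$. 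Using the determinant identity stated at the end of \S 3, $|\alpha I+\beta A_{m,n}^{-1}|=\alpha^3+\alpha^2\beta m-\alpha\beta^2 n+\beta^3$, one checks that for $m=2a-5$, $n=7a-5$ an operator of your form $A_{m,n}^{-1}-kI$ has determinant $-k^3+k^2(2a-5)+k(7a-5)+1$, and the coefficient of $a$, namely $2k^2+7k$, has no nonzero integer root; so no such $k$ gives a unit even for a single $a\ge 2$, and inverting a non-unit destroys integrality. By contrast $(\alpha,\beta)=(7,2)$ gives determinant exactly $1$ for all $a$.

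Your guiding heuristic is also off in a way worth correcting: the right shift constants do \emph{not} scale with $a$. They are fixed by the direction vector of the arithmetic progression $(m,n)=(-5,-5)+a\,(2,7)$, i.e.\ $\beta=2$, $\alpha=7$, precisely so that the $a$-dependent terms in $\alpha^2\beta m-\alpha\beta^2 n$ cancel and $\alpha I+\beta A_{m,n}^{-1}$ is unimodular uniformly in $a$. This is the mechanism the paper describes after Proposition~\ref{t35} for all such series $A_{m_0+\beta s,\,n_0+\alpha s}$. Once the correct $X_a$ is in hand, the rest of your plan (verifying the edge identifications $AB\mapsto DC$, $AD\mapsto BC$, locating $E=(-1,0,2a-1)$ and $F=(0,-a,7a^2-5a+1)$, and counting the $a-2$ and $a-1$ interior integer points on $BE$ and $DF$) is indeed the bounded symbolic computation you describe and agrees with the paper's proof.
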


\begin{proof}
Let us choose the following generators of the subgroup of integer operators
mapping the sail to itself:
$$
X_{a}=2A_{m,n}^{-1}+7I;
\quad
Y_{a}=A_{m,n}^2.
$$

Let us make the closure of the fundamental
regions of the sail
(containing the point $(0,0,1)$) that obtains by factoring by the operators
$X_{a}$ and $Y_{a}$.
Let $A=(-14,4,-1)$, $B=(-1,1-a,7a^2-10a+4)$, $C=(1,5-7a,49a^2-72a+30)$
and $D=(0,0,1)$.
Under the operator $X_{a}$ action
the segment $AB$ maps to the segment $DC$ (the point $A$ maps to
the point $D$ and the $B$ --- to $C$).
Under the operator $Y_{a}$ action
the segment $AD$ maps to the segment $BC$ (the point $A$ maps to
the point $B$ and $D$ --- to $C$).
Besides this points the vertices $E=(-1,0,2a-1)$ and $F=(0,-a,7a^2-5a+1)$
are in the fundamental region.
The interval $BE$ contains $a-2$ integer points, the interval $DF$
--- $a-1$, $AD$ and $CB$ --- one point for each.
\end{proof}

\begin{proposition} \label{t34}
Let $m=a-1$, $n=3+2a$ $(a\ge 0)$, then the torus triangulation
corresponding to the operator $A_{m,n}$ is homeomorphic to the
following one:
$$\epsfbox{fig2.6}$$
$($in the figure $a=4)$.
\end{proposition}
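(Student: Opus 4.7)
The plan is to follow the recipe used in Propositions~\ref{t31}--\ref{t33} verbatim. First I would locate two commuting integer operators $X_a$ and $Y_a$ that (i) commute with $A_{m,n}$, (ii) have only positive real eigenvalues so they preserve each sail rather than permuting the eight of them, (iii) have determinant $+1$, and (iv) generate the full stabilizer subgroup of the sail inside $SL(3,\z)$. As in the previous cases I would search for $X_a, Y_a$ among low-degree polynomials in $A_{m,n}^{\pm 1}$ with integer coefficients (e.g.\ $A_{m,n}^{\pm 2}$, $\alpha A_{m,n}^{-1}+\beta I$, or $(\alpha I + \beta A_{m,n}^{-1})^{-1}$ when the latter happens to be integer); the positivity condition can be checked by evaluating the associated polynomial at the three real eigenvalues of $A_{m,n}$ --- equivalently, by checking that the corresponding characteristic polynomial is everywhere negative on the nonpositive real semi-axis.

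Once $X_a$ and $Y_a$ are in hand, I would pick a convenient vertex $D=(0,0,1)$ of the sail together with images $A, B, C$ of it under $I, X_a^{-1}, X_a^{-1}Y_a$ (or an analogous quadruple), verify explicitly that $X_a$ takes the edge $AB$ to the edge $DC$ and $Y_a$ takes the edge $AD$ to the edge $BC$, and exhibit the remaining interior integer vertices of the fundamental region (call them $E$, $F$, \ldots) as explicit triples in $\z^3$. From their coordinates the integer lengths of all edges, the integer areas of all triangles, and the integer distances from the origin to the supporting planes of the faces can be read off directly; this in particular confirms that the affine types of the faces agree with those drawn on fig.~2.6.

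The main obstacle is finding the correct generators $X_a$ and $Y_a$: the characteristic polynomial of $A_{a-1,\,2a+3}$ is $\chi(x)=x^3+(2a+3)x^2+(a-1)x-1$, and $X_a,Y_a$ must be simultaneously a polynomial combination of $A$ and $I$ and an honest element of $SL(3,\z)$ with positive spectrum. Guided by the form of $Y_a$ in Proposition~\ref{t32} (where $n=2a+3$ already), I would first try $Y_a=(\alpha I - A_{m,n}^{-1})^{-1}$ for small $\alpha$, together with $X_a$ a power or quadratic polynomial in $A_{m,n}$ chosen so that the pair generates a rank-two subgroup (equivalently, so that $X_a$ and $Y_a$ are multiplicatively independent in the unit group described by Dirichlet's theorem).

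Finally, as in Proposition~\ref{t31}, the minimality step --- i.e.\ that $X_a,Y_a$ generate the \emph{whole} stabilizer and not a proper subgroup --- follows once the quotient is shown to possess the combinatorial structure of the figure, because that structure has a single vertex orbit (or, for the larger fundamental region of Proposition~\ref{t34}, the minimal number of orbits compatible with the count of integer points in the closure), so no finer subperiod is possible. Comparison with the picture then completes the proof.
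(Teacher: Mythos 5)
Your outline correctly identifies the method used throughout Propositions~\ref{t31}--\ref{t35}, but as it stands it is a search strategy rather than a proof: the entire mathematical content of this proposition lies in the explicit data that your plan defers to a later ``search,'' and none of that data is produced. Concretely, the paper takes $X_{a}=(2I+A_{m,n}^{-1})^{-2}$ and $Y_{a}=A_{m,n}^{-2}$ --- neither of which is of the form $(\alpha I - A_{m,n}^{-1})^{-1}$ that you propose to try first --- and then exhibits seven named vertices $A=(1,-2a-3,4a^2+11a+10)$, $B=(0,0,1)$, $C=(-4a-11,2a+5,-a-2)$, $D=(-a-2,0,a^2+3a+3)$, $E=(-2,1,0)$, $F=(-2a-3,a+1,1)$, $G=(0,-1-a,2a^2+5a+4)$, together with the count of interior integer points. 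Without these, nothing about fig.~2.6 has been verified.

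There is also a structural misstep in your template. You assume the fundamental region is a quadrilateral $ABCD$ whose four edges are identified pairwise edge-to-edge, as in Propositions~\ref{t31} and~\ref{t32}. In the present case that is false: $X_a$ does carry the segment $AB$ to $DC$, but $Y_a$ carries the \emph{broken line} $AGD$ to the broken line $BEC$, the segments $BG$ and $DF$ each carry $a$ interior integer points, and the pentagon $BEFDG$ contains $(a+1)^2$ integer points in its interior, namely $(-j,-i+j,(2a+3)i-(a+2)j+1)$ for $1\le i\le a+1$, $1\le j\le 2i-1$. So the combinatorics of the identification is genuinely different from the earlier propositions, and a proof that only checks edge-to-edge matching of a quadrilateral would not establish the triangulation claimed in the statement. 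Your remarks on positivity of the spectrum, determinant one, and minimality of the generating pair are consistent with what the paper does (it explicitly refers back to the argument of Proposition~\ref{t31} for those points), but they cannot substitute for the explicit fundamental domain.
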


\begin{proof}
Let us choose the following generators of the subgroup of integer operators
mapping the sail to itself:

$$
X_{a}=(2I+A_{m,n}^{-1})^{-2};
\quad
Y_{a}=A_{m,n}^{-2}.
$$

Let us make the closure of one of the fundamental
regions of the sail
(containing the point $(0,0,1)$) that obtains by factoring by the operators
$X_{a}$ and $Y_{a}$.
Let $A=(1,-2a-3,4a^2+11a+10)$, $B=(0,0,1)$, $C=(-4a-11,2a+5,-a-2)$
and $D=(-a-2,0,a^2+3a+3)$.
Besides this points the vertices
$E=(-2,1,0)$, $F=(-2a-3,a+1,1)$ and $G=(0,-1-a,2a^2+5a+4)$
are in the fundamental region.
The intervals $BG$ and $DF$ contains $a$ integer points each.
In the interior of the pentagon $BEFDG$ $(a+1)^2$
integer points of the form:
$(-j,-i+j,(2a+3)i-(a+2)j+1)$, where $1\le i\le a+1$, $1\le j\le 2i-1$
are contained.
Under the operator $X_{a}$ action
the segment $AB$ maps to the segment $DC$ (the point $A$ maps to
the point $D$ and $B$ to $C$).
Under the operator $Y_{a}$ action
the broken line $AGD$ maps to the broken line $BEC$ (the point $A$ maps to
the point $B$, the point $G$ maps to the point $E$,
and the point $D$ --- to the point $C$).
\end{proof}

\begin{proposition}\label{t35}
Let $m=-(a+2)(b+2)+3$, $n=(a+2)(b+3)-3$ $(a\ge 0$, $b\ge 0)$,
then the torus triangulation corresponding to the operator $A_{m,n}$
is homeomorphic to the following one:
$$\epsfbox{fig2.7}$$
$($in the figure $b=5)$.
\end{proposition}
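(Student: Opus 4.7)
The plan is to follow exactly the template established in Propositions~\ref{t31}--\ref{t34}. First I would produce explicit generators $X_{a,b}$ and $Y_{a,b}$ of the rank-two lattice of integer operators commuting with $A_{m,n}$ and preserving each sail; these will be polynomials in $A_{m,n}$ (and in $A_{m,n}^{-1}$, which by Cayley--Hamilton applied to $\lambda^3+n\lambda^2+m\lambda-1$ equals $A_{m,n}^2+nA_{m,n}+mI$) chosen so that $\det X_{a,b}=\det Y_{a,b}=1$ and so that all eigenvalues are positive (equivalently, the values of each characteristic polynomial on the negative semi-axis are negative, so that the action does not transpose the sails). Guided by the two-parameter case of Proposition~\ref{t31}, the natural first ansatz is $X_{a,b}=A_{m,n}^{-2}$ together with $Y_{a,b}$ of the form $A_{m,n}^{-1}\bigl(A_{m,n}^{-1}-cI\bigr)$ for an integer $c=c(a,b)$ built linearly from $a+2$ and $b+2$, possibly corrected by an additional monomial so that the determinant is $\pm 1$ in the presence of the quadratic dependence of $m$ and $n$ on $(a+2)(b+2)$.

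Once the generators are fixed, I would locate the closure of a fundamental region on the sail through $B=(0,0,1)$ by computing the images $X_{a,b}(B)$, $Y_{a,b}(B)$ and $X_{a,b}Y_{a,b}(B)$ directly from the matrix entries, then listing the remaining interior vertices and the integer points lying on the boundary edges. The resulting adjacency pattern and identifications of edges (or broken lines, as in Proposition~\ref{t34}) should match the displayed figure. Since the figure is drawn for $b=5$ while $a$ is suppressed, I expect the combinatorial complexity to grow linearly in $b$, as in Propositions~\ref{t31} and~\ref{t33}, and to depend only mildly on $a$. Integer lengths, areas and distances are read off from the vertex coordinates by the determinantal formulas of \S 1, although only the homeomorphism type is required by the statement.

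To conclude that $X_{a,b}$ and $Y_{a,b}$ are in fact generators rather than just two elements of the stabilizer, I would appeal to the unique-vertex/tetrahedral-angle argument recorded after the proof of Proposition~\ref{t31}: if the torus triangulation produced by factoring by $\langle X_{a,b},Y_{a,b}\rangle$ has a single zero-dimensional face then it can have no proper subperiod, so the subgroup is the full stabilizer, and the same argument confines every vertex of the true fundamental region to the convex hull of $0$, $B$, $X_{a,b}(B)$, $Y_{a,b}(B)$, $X_{a,b}Y_{a,b}(B)$. The main obstacle is the first step --- guessing $Y_{a,b}$ correctly; once that is pinned down, the remaining verifications amount to mechanical linear algebra modulo $\lambda^3+n\lambda^2+m\lambda-1$.
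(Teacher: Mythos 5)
Your outline reproduces the method of the paper exactly --- choose two commuting integer operators preserving the sail, exhibit the closure of a fundamental region through $(0,0,1)$, read off the edge identifications, and invoke the unique-vertex argument after Proposition~\ref{t31} to see the pair generates the full stabilizer. But the outline is all you give: the entire substance of the paper's proof is the explicit data, and you supply none of it. The paper takes
$$
X_{a,b}=\bigl((b+3)I-(b+2)A_{m,n}^{-1}\bigr)A_{m,n}^{-2},
\qquad
Y_{a,b}=A_{m,n}^{-2},
$$
so your first guess $A_{m,n}^{-2}$ is indeed one of the generators, but your ansatz for the other, $A_{m,n}^{-1}\bigl(A_{m,n}^{-1}-cI\bigr)=A_{m,n}^{-2}-cA_{m,n}^{-1}$ with $c$ linear in $a+2$ and $b+2$, has the wrong shape: the correct operator is $(b+3)A_{m,n}^{-2}-(b+2)A_{m,n}^{-3}$, whose unit factor is of the form $\alpha I+\beta A_{m,n}^{-1}$ with $(\alpha,\beta)=(b+3,-(b+2))$ --- this is exactly the mechanism the paper flags in its closing remark about operators $\alpha I+\beta A^{-1}_{m,n}$, and it is what makes the determinant condition work against the quadratic dependence of $(m,n)$ on $(a+2)(b+2)$. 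Since you yourself identify ``guessing $Y_{a,b}$ correctly'' as the main obstacle and then do not overcome it, the proof is not complete.

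The second missing half is the fundamental region itself. The paper lists seven vertices $A,B,C,D,E,F,G$ with explicit (and rather unguessable) coordinates, e.g.\ $A=(b^2+3b+3,\,b^2+2b-a+1,\,a^2b+3a^2+4ab+b^2+6a+5b+4)$ and $D=(0,0,1)$, records that the interval $BD$ carries $b+1$ interior integer points, and specifies that $X_{a,b}$ sends the segment $AB$ to $DC$ while $Y_{a,b}$ sends the broken line $AFD$ to the broken line $BEC$. You correctly anticipate that a broken-line identification as in Proposition~\ref{t34} may occur and that the complexity grows linearly in $b$, but anticipating the shape of the answer is not the same as verifying it. To close the gap you would need to exhibit the generators above, compute the vertex coordinates and the integer points on the boundary edges, and check the stated identifications by direct matrix multiplication.
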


\begin{proof}
Let us choose the following generators of the subgroup of integer operators
mapping the sail to itself:
$$
X_{a,b}=((b+3)I-(b+2)A_{m,n}^{-1})A_{m,n}^{-2};
\quad
Y_{a,b}=A_{m,n}^{-2}.
$$

Let us make the closure of one of the fundamental
regions of the sail
(containing the point $(0,0,1)$) that obtains by factoring by the operators
$X_{a,b}$ and $Y_{a,b}$.
Let $A=(b^2+3b+3, b^2+2b-a+1,a^2b+3a^2+4ab+b^2+6a+5b+4)$,
$B=(b^2+5b+6,b^2+4b+4)$, $C=(-ab-2a-2b-1,1,0)$ and $D=(0,0,1)$.
The interval $BD$ contains $b+1$ integer points.
Besides this points the vertices
$E=(b+4,b+3,b+2)$, $F=(b+2,b+1,a+b+2)$ and $G=(1,1,1)$
are in the fundamental region.
Under the operator $X_{a,b}$ action
the segment $AB$ maps to the segment $DC$ (the point $A$ maps to
the point $D$ and the point $B$ --- to the point $C$).
Under the operator $Y_{a,b}$ action
the broken line $AFD$ maps to the broken line $BEC$ (the point $A$ maps to
the point $B$, the point $F$ maps to the point $E$,
and the point $D$ --- to the point $C$).
\end{proof}

Note that the generators of the subgroup of operators
commuting with the operator $A_{m,n}$ that do not transpose the sails
can be expressed by the operators $A_{m,n}$ and
$\alpha I +\beta A^{-1}_{m,n}$,
where $\alpha$ and $\beta$ are nonzero integer numbers.

It turns out that in general case the following statement is true:
the determinants of the matrices for the operators
$\alpha I +\beta A^{-1}_{m,n}$ and
$\alpha I +\beta A^{-1}_{m+k\beta,n+k\alpha}$ are equal.
In particular, if the absolute value of the determinant of the matrix
for the operator $\alpha I +\beta A^{-1}_{m,n}$ is equal to one
then the absolute value of the determinant of the matrix
for the operator
$\alpha I +\beta A^{-1}_{m+k\beta,n+k\alpha}$ is also equal to one
for an arbitrary integer $k$.

Seemingly torus triangulation for the other sequences of operators \\
$A_{m_0+\beta s,n_0+\alpha s}$, where $s \in \n$,
(besides considered in the propositions~\ref{t31}---\ref{t35})
have much in common (for example, number of polygons and their types).

Note that the numbers $\alpha$ and $\beta$ for such sequences
satisfy the following interesting property. Since
$$
|\alpha I +\beta A_{m,n}^{-1}|=
\alpha^3+\alpha^2\beta m-\alpha \beta^2n+\beta^3,
$$
the integer numbers $m$ and $n$ such that
$|\alpha^3+\alpha^2\beta m-\alpha \beta^2n+\beta^3|=1$
exist iff $\alpha^3-1$ is divisible by $\beta$
and $\beta^3-1$ is divisible by  $\alpha$,
or $\alpha^3+1$ is divisible by $\beta$ and
$\beta^3+1$ is divisible by $\alpha$.

For instance such pairs $(\alpha,\beta)$ for $10 \ge \alpha \ge \beta \ge -10$
(besides described in the propositions~\ref{t31}---\ref{t35})
are the following:
$(3,2)$, $(7,-2)$, $(9,-2)$, $(9,2)$, $(7,-4)$, $(9,4)$, $(9,5)$, $(9,7)$.

\begin{figure}
$$\epsfbox{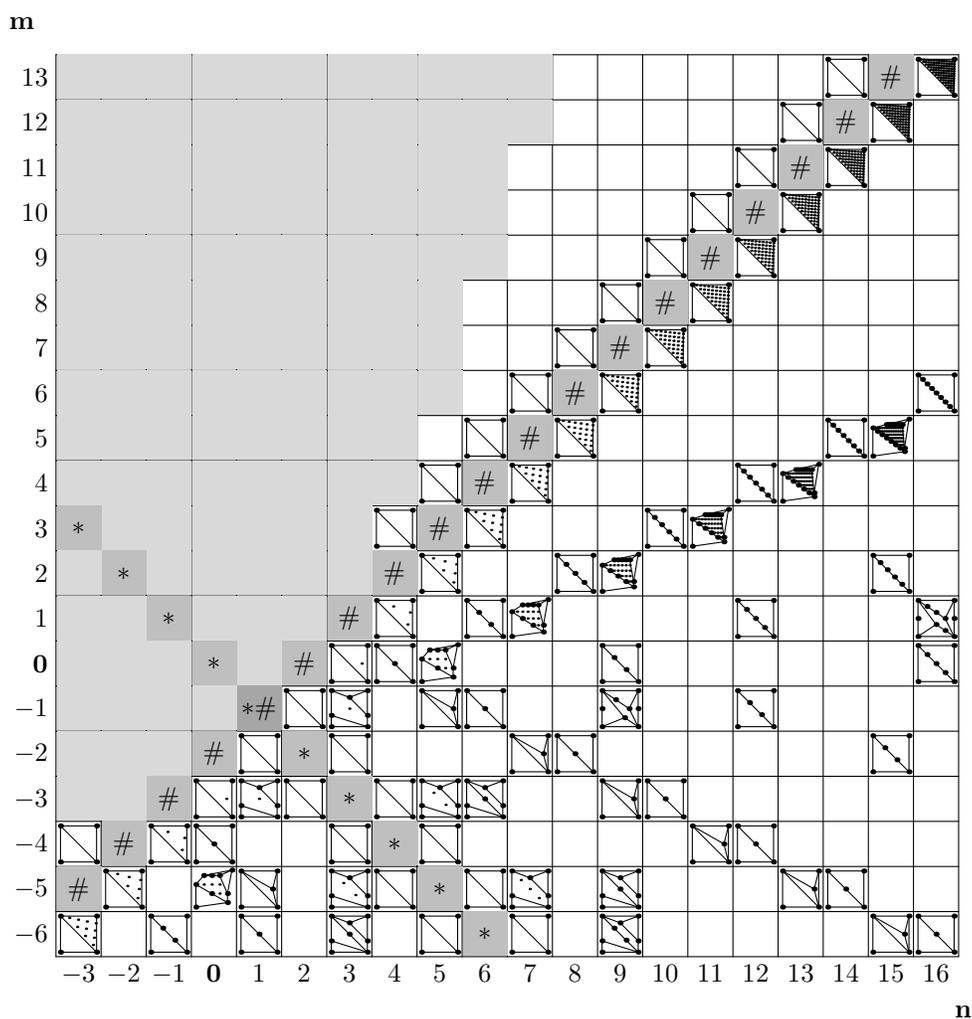}$$
\caption{Torus triangulations for operators $A_{m,n}$.}\label{pic2}
\end{figure}

In conclusion we show the table with squares filled with torus triangulation
of the sails constructed in this work whose convex hulls contain
the point with the coordinates $(0,0,1)$, see fig~\ref{pic2}.
The torus triangulation for the sail of the two-dimensional continued fraction
for the cubic irrationality, constructed by the operator $A_{m,n}$
is shown in the square sited at the intersection of the string with number $n$
and the column with number $m$.
If one of the roots of characteristic polynomial for the operator is equal to
1 or -1 at that than we mark the square $(m,n)$ with the sign $*$
or $\#$ correspondingly.
The squares that correspond to the operators which characteristic
polynomial has two complex conjugate roots
we paint over with light gray color.


\begin{thebibliography}{99}
\bibitem{Arn1}
V.~I.~Arnold,
{\it Continued fractions},
M.: MCCME (2002).
\bibitem{BSh}
Z.~I.~Borevich, I.~R.~Shafarevich,
{\it Number theory},
3 ed., M., (1985).
\bibitem{Del1}
B.~N.~Delone, D.~K.~Faddeev,
{\it The third power irrationalities theory},
M.-L.: Ac. Sci. USSR, (1940).
\bibitem{Kor1}
E.И.~Korkina,
{\it La p\'eriodicit\'e des fractions continues multidimensionelles},
C. R. Ac. Sci. Paris, v. 319(1994), pp. 777--780.
\bibitem{Kor2}
E.~I.~Korkina,
{\it Two-dimensional continued fractions. The simplest examples},
Proceedings of V.~A.~Steklov Math. Ins., v. 209(1995), pp. 143--166.
\bibitem{Mus}
J.-O.~Moussafir,
{\it Sales and Hilbert bases},
Func. an. and appl., v. 34(2000), n. 2, pp. 43-49.
\bibitem{Tsu}
H.~Tsuchihashi,
{\it Higher dimensional analogues of periodic continued fractions
and cusp singularities},
Tohoku Math. Journ. v. 35(1983), pp. 176--193.
\end{thebibliography}
\end{document}